\newtheorem{claim}{Claim}[section]
\newtheorem{theorem}[claim]{Theorem}
\newtheorem{remark}[claim]{Remark}
\definecolor{Myblue}{cmyk}{1, 1, 0, 0}
\definecolor{Mylily}{cmyk}{0.22, 0.42, 0, 0.40}
\definecolor{Mybrown}{cmyk}{0.21,0.58,1,0.05}
\definecolor{Myred}{cmyk}{0.0,1.0,1.0,0.00}
\definecolor{Mypurple}{rgb}{0.5,0.0,0.5}
\definecolor{Mygreen}{rgb}{0,0.4,0}
\definecolor{grey}{rgb}{.5,.5,.5}
\begin{document}
\begin{center}
{\Large{\textbf{Magnetic Dirichlet Laplacian with radially symmetric magnetic field}}}

\bigskip

{\large{Diana Barseghyan$^{a, b}$, Fran\c{c}oise Truc$^c$}}

\end{center}

\bigskip \begin{quote}

{\small  a) Department of Mathematics, Faculty of Science, University \\ \phantom{c)} of Ostrava, 30.~dubna 22, 70103 Ostrava, Czech Republic \\

b) Department of Theoretical Physics, Nuclear Physics Institute ASCR, \\ \phantom{c)} 25068 \v{R}e\v{z} near Prague, Czech Republic \\[.3em]
  
c) Institut Fourier, UMR 5582 du CNRS Universite de Grenoble I, BP 74, 38402 Saint-Martin d'Heres, France, Bureau 111 \\  

\phantom{c)} \emph{diana.barseghyan@osu.cz, francoise.truc@ujf-grenoble.fr}}

\end{quote}

\bigskip

\noindent \textbf{Abstract.} The aim of the paper is to derive spectral estimates on the eigenvalue moments of the magnetic Dirichlet
 Laplacian defined on the two-dimensional disk with a radially symmetric magnetic field.

\bigskip
\section{Introduction}\setcounter{equation}{0}
Let us consider a particle in a domain $\Omega$ in $\mathbb{R}^2$ in the presence of a magnetic field $B$. We define the 2-dimensional magnetic Laplacian associated 
to this particle as follows:

Let $A$ be a magnetic potential associated to $B$, i.e. a smooth real valued-function on $\Omega \subset \mathbb{R}^2$ verifying   $\mathrm{rot}\, (A)=B$. 
The magnetic Dirichlet Laplacian is initially defined  on $ C_0^\infty (\Omega)$ by
$H_\Omega (A) = (i\nabla + A)^2$.

Under the assumptions that $\Omega$ is a bounded domain and that $A$ satisfies mild regularity conditions,  which means that in particular the magnetic field $B\in L_{\mathrm{loc}}^\infty(\mathbb{R}^2)$ and the corresponding magnetic potential $A\in L^\infty (\Omega)$, the magnetic Sobolev norm $\|(i \nabla+A) u\|_{L^2(\Omega)},\,\,u\in \mathcal{H}_0^1 (\Omega)$, is closed and equivalent to the non-magnetic one, so the self-adjoint Friedrich's extension 
 has a purely discrete spectrum as in the non-magnetic case. 

In the paper we also consider the case when the magnetic field grows to infinity as the variable approaches the boundary 
and has a non zero infimum \begin{equation}\label{grow}B(z)\to\infty\quad\text{as}\quad z\to \partial\Omega \quad\text{and}\quad
K:=\inf\,B(z)>0.\end{equation} In view of the lower bound $$(H_\Omega (A) (u), u)_{L^2(\Omega)}\ge \int_\Omega B(z) |u|^2(z)\,\mathrm{d}z,$$ one  again can construct the Friedrich' s extension of $H_\Omega (A)$ initially defined on $C_0^\infty (\Omega)$. Moreover, it still has a purely discrete spectrum--\cite{T12}.

For simplicity, we will use for the Friedrich's extension the same symbol  $H_\Omega(A)$, and 
we shall denote the increasingly ordered sequence of  its eigenvalues by $\lambda_k =\lambda_k(\Omega, A)$.

The  purpose of  this paper is to establish bounds of the eigenvalue moments of such operators. 
Let us recall the following bound which was proved by Berezin, Li and Yau   for non-magnetic Dirichlet Laplacians on a domain  $\Omega$ in $\mathbb{R}^d$
-- \cite{Be72a, Be72b, LY83},
\begin{equation}
\label{Berezin bound}\sum_k(\Lambda-\lambda_k(\Omega,0))_+^\sigma\le L_{\sigma,d}^{\mathrm{cl}}\,|\Omega|\,
\Lambda^{\sigma+\frac{d}{2}} \quad\text{for any}\;\;\sigma\ge1 \;\;\text{and}\;\;\Lambda>0\,,
\end{equation}
where $|\Omega|$ is the volume of $\Omega$, and the constant on the right-hand side,
\begin{equation}\label{semiclassical constant}
L_{\sigma,d}^{\mathrm{cl}}=\frac{\Gamma(\sigma+1)}{(4\pi)^{\frac{d}{2}}\Gamma
(\sigma+1+d/2)}\,,
\end{equation}
is optimal. Moreover,  for $0\le\sigma<1$, the bound (\ref{Berezin bound})  still exists, but with another constant on the right-hand side
-- \cite{La97} 
\begin{equation}\label{Laptev ineq.}
\sum_k(\Lambda-\lambda_k(\Omega,0))_+^\sigma\le
2\left(\frac{\sigma}{\sigma+1}\right)^\sigma L_{\sigma,d}^{\mathrm{cl}}\,|\Omega|\,
\Lambda^{\sigma+\frac{d}{2}}\,,\quad 0\le\sigma<1\,.
\end{equation}
In the magnetic case, in view of the pointwise diamagnetic inequality which means that under the rather general assumptions on the magnetic potentials \cite{LL01}
$$
|\nabla|u(x)||\le|(i\nabla+A)u(x)|\quad\text{for a.a.}\;\; x\in\Omega\,,
$$
we get  that $\lambda_1(\Omega, A)\ge\lambda_1(\Omega,0)$. However, the estimate $\lambda_j(\Omega, A)\ge\lambda_j(\Omega,0)$ fails in general 
if $j\ge2$. Let us mention that,  nevertheless, momentum estimates are still valid for some values of the parameters. In particular, it was shown \cite{LW00} that 
the sharp bound (\ref{Berezin bound}) holds true for arbitrary magnetic fields provided $\sigma\ge\frac{3}{2}$, and  for constant magnetic fields if 
$\sigma\ge1$-- \cite{ELV00}. In the two-dimensional case  the bound (\ref{Laptev ineq.}) holds true for constant magnetic fields if $0\le\sigma<1$, and the constant on 
the  right-hand side cannot be improved --\cite{FLW09}.

In the present work we study the magnetic Dirichlet Laplacian $H_\Omega(A)$ defined on the two- dimensional disk $\Omega$ 
centered in zero and with radius $r_0>0$, with a radially symmetric magnetic field $B(x)=B(|x|)\ge0$.  Our  aim is to extend 
a sufficiently precise  Berezin type inequality to this situation.  A similar problem was studied recently 
in \cite{BEKW16}, but under very strong restrictions on the growth of the magnetic field. 

Let us also mention that some estimates on the counting function of the eigenvalues of the magnetic Dirichlet Laplacian on a disk
 were established in \cite{T12}, in the case where the field is  radial and satisfies some growth condition near the boundary.

\bigskip \section{Main Results}\setcounter{equation}{0}
Before stating the results we define the following one-dimensional operators  $l(B)$ and $\widetilde{l}(B)$  in $L^2((0, r_0), 2\pi r\mathrm{d}r)$ associated with the closures of the quadratic forms
\begin{eqnarray*} Q\left(l(B)\right)[u]= \int_0^{r_0} r \left|\frac{\mathrm{d}u}{\mathrm{d}r}\right|^2\,\mathrm{d}r+\int_0^{r_0}\frac{1}{r}\left(\int_r^1 s B(s)\,
\mathrm{d}s\right)^2|u|^2\,\mathrm{d}r\,,\\Q\left(\widetilde{l}(B)\right)[u]= \int_0^{r_0} r \left|\frac{\mathrm{d}u}{\mathrm{d}r}\right|^2\,
\mathrm{d}r+\int_0^{r_0}\frac{1}{r}\left(\int_0^r s B(s)\,\mathrm{d}s\right)^2|u|^2\,\mathrm{d}r\end{eqnarray*} defined originally on $C_0^\infty(0, r_0)$, 
and acting on their domain as \begin{eqnarray}\nonumber l(B)=-\frac{\mathrm{d}^2}{\mathrm{d}r^2}-\frac{1}{r}\frac{\mathrm{d}}{\mathrm{d}r}+\frac{1}{r^2}
\left(\int_r^{r_0} s B(s)\,\mathrm{d}s\right)^2\,,\\ \label{wide{tilde}{l_0}} \widetilde{l}(B)=-\frac{\mathrm{d}^2}{\mathrm{d}r^2}-\frac{1}{r}\frac{\mathrm{d}}{\mathrm{d}r}
+\frac{1}{r^2}\left(\int_0^r s B(s)\,\mathrm{d}s\right)^2\,.\end{eqnarray}
The following theorem holds true:  \begin{theorem}\label{first theorem}
Let $H_\Omega(A)$ be the magnetic Dirichlet Laplacian on the disk $\Omega$ of radius equal to $r_0$ centered at the origin with a radial magnetic field $B(x)=B(|x|)\ge0$. Let us assume the validity of the mild regularity conditions for the magnetic potential $A$ discussed in Introduction or the validity of (\ref{grow}). Moreover, let
$\int_0^{r_0} s B(s)\,\mathrm{d}s<\infty$.

If $\int_0^{r_0}s B(s)\,\mathrm{d}s\notin\mathbb{Z}$,  then for any $\Lambda\ge0$ and $\sigma\ge3/2$, the following inequality holds 
\begin{eqnarray*}\mathrm{tr}\left(\Lambda-H_\Omega(A)\right)_+^\sigma\le\frac{1}{2}\mathrm{tr}\left(\Lambda-\left(-\Delta_D^\Omega+\frac{1}{x^2+y^2}\left(\int_{\sqrt{x^2+y^2}}^{r_0} s B(s)\,
\mathrm{d}s\right)^2\right)\right)_+^\sigma\\\label{first}\\+\frac{1}{2}\mathrm{tr}\left(\Lambda-\left(-\Delta_D^\Omega+\frac{1}{x^2+y^2}\left(\int_0^{\sqrt{x^2+y^2}} s B(s)\,
\mathrm{d}s\right)^2\right)\right)_+^\sigma\\+\frac{2 L_{\sigma, 1}^{\mathrm{cl}}\,r_0^{2\sigma+1}}{2\sigma+1}\,\left[\int_0^{r_0}s B(s)\,\mathrm{d}s\right]\,
\Lambda^{\sigma+1/2}+\frac{1}{2}\mathrm{tr}\left(\Lambda-l(B)\right)_+^\sigma+\frac{1}{2}\mathrm{tr}\left(\Lambda-
\widetilde{l}(B)\right)_+^\sigma\,\,\end{eqnarray*}
If $\int_0^{r_0}s B(s)\,\mathrm{d}s\in\mathbb{Z}$,  then for any $\Lambda\ge0$ and $\sigma\ge3/2$, the following inequality holds  
\begin{eqnarray*}\mathrm{tr}\left(\Lambda-H_\Omega(A)\right)_+^\sigma\le\frac{1}{2}\mathrm{tr}\left(\Lambda-\left(-\Delta_D^\Omega+\frac{1}{x^2+y^2}\left(\int_{\sqrt{x^2+y^2}}^{r_0} s B(s)\,
\mathrm{d}s\right)^2\right)\right)_+^\sigma\\\label{first}\\+\frac{1}{2}\mathrm{tr}\left(\Lambda-\left(-\Delta_D^\Omega+\frac{1}{x^2+y^2}\left(\int_0^{\sqrt{x^2+y^2}} s B(s)\,
\mathrm{d}s\right)^2\right)\right)_+^\sigma\\+\frac{2 L_{\sigma, 1}^{\mathrm{cl}}\,r_0^{2\sigma+1}}{2\sigma+1}\left[\int_0^{r_0}s B(s)\,\mathrm{d}s\right]\,
\Lambda^{\sigma+1/2}-\frac{1}{2}\mathrm{tr}\left(\Lambda-l(B)\right)_+^\sigma+\frac{1}{2}\mathrm{tr}\left(\Lambda-
\widetilde{l}(B)\right)_+^\sigma\,.\end{eqnarray*}
where the operators $l(B)$ and $\widetilde{l}(B)$ are defined in (\ref{wide{tilde}{l_0}}) and 
$L_{\sigma, 1}^{\mathrm{cl}}$ is the semiclassical constant given by (\ref{semiclassical constant}).\end{theorem}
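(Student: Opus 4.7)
The plan is to exploit the radial symmetry by decomposing $H_\Omega(A)$ into angular-momentum fibers. In the symmetric gauge $A=(\Phi(r)/r^2)(-y,x)$ with $\Phi(r)=\int_0^r sB(s)\,\mathrm{d}s$, the operator commutes with rotations and admits the unitary decomposition $H_\Omega(A)=\bigoplus_{m\in\mathbb{Z}}h_m$, where
\[ h_m=-\frac{\mathrm{d}^2}{\mathrm{d}r^2}-\frac{1}{r}\frac{\mathrm{d}}{\mathrm{d}r}+\frac{(m-\Phi(r))^2}{r^2} \]
acts on $L^2((0,r_0),r\,\mathrm{d}r)$ with Dirichlet data at $r_0$. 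Setting $\alpha=\int_0^{r_0}sB(s)\,\mathrm{d}s$ and $\Psi(r)=\alpha-\Phi(r)$, the spectral sum $\mathrm{tr}(\Lambda-H_\Omega(A))_+^\sigma=\sum_{m\in\mathbb{Z}}\mathrm{tr}(\Lambda-h_m)_+^\sigma$ will be split into the three $m$-ranges $m\le 0$, $1\le m\le[\alpha]$, and $m\ge[\alpha]+1$, and then recombined using the identity $\sum_{m'\in\mathbb{Z}}\mathrm{tr}(\Lambda-h^{(V)}_{m'})_+^\sigma=\mathrm{tr}(\Lambda-(-\Delta_D^\Omega+V))_+^\sigma$ together with the symmetry $h^{(V)}_{m'}=h^{(V)}_{-m'}$ of the angular fibers of a radial Schr\"odinger operator.

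For $m\le 0$ the elementary identity $(m-\Phi(r))^2=(|m|+\Phi(r))^2\ge m^2+\Phi(r)^2$ yields, by min--max, $\mathrm{tr}(\Lambda-h_m)_+^\sigma\le\mathrm{tr}(\Lambda-h_m^{\mathrm{out}})_+^\sigma$, where $h_m^{\mathrm{out}}$ denotes the $m$-th angular fiber of $-\Delta_D^\Omega+\Phi(r)^2/r^2$; summing over $m\le 0$ collapses, via the $m'\leftrightarrow -m'$ symmetry, to $\tfrac{1}{2}\mathrm{tr}(\Lambda-(-\Delta_D^\Omega+\Phi(r)^2/r^2))_+^\sigma+\tfrac{1}{2}\mathrm{tr}(\Lambda-\widetilde l(B))_+^\sigma$, the second trace being the $m'=0$ fiber. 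For $m\ge[\alpha]+1$ the identity $m-\Phi(r)=(m-\alpha)+\Psi(r)$ (both summands non-negative) gives $(m-\Phi(r))^2\ge(m-\alpha)^2+\Psi(r)^2$. If $\alpha\notin\mathbb{Z}$ one further uses $(m-\alpha)^2\ge(m-[\alpha]-1)^2$ and performs the shift $m'=m-[\alpha]-1\ge 0$, producing $\tfrac12\mathrm{tr}(\Lambda-(-\Delta_D^\Omega+\Psi^2/r^2))_+^\sigma+\tfrac12\mathrm{tr}(\Lambda-l(B))_+^\sigma$. If $\alpha\in\mathbb{Z}$ one uses instead the tight shift $m'=m-\alpha\ge 1$ (since $(m-\alpha)^2$ is already an integer square), yielding $\tfrac12\mathrm{tr}(\Lambda-(-\Delta_D^\Omega+\Psi^2/r^2))_+^\sigma-\tfrac12\mathrm{tr}(\Lambda-l(B))_+^\sigma$; this asymmetry is the origin of the sign flip in the statement.

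The middle range $1\le m\le[\alpha]$ is the genuinely delicate portion, because $(m-\Phi(r))^2$ vanishes at an interior turning point $r_m\in(0,r_0)$ satisfying $\Phi(r_m)=m$ and no useful monotone lower bound on $h_m$ is available. The strategy is to control each such fiber uniformly in $m$ via a one-dimensional Berezin--Li--Yau inequality (valid for $\sigma\ge 3/2$, which explains the restriction on $\sigma$), applied after the Liouville transformation $v=\sqrt{r}\,u$ that unitarily conjugates $h_m$ to $\tilde h_m=-\partial_r^2+((m-\Phi(r))^2-\tfrac{1}{4})/r^2$ on $L^2((0,r_0),\mathrm{d}r)$; for $m\ge 1$ the centrifugal-type contribution $m^2/r^2$ dominates the Hardy correction $-1/(4r^2)$ near the origin, and the non-negativity of $(m-\Phi(r))^2$ then delivers a bound $\mathrm{tr}(\Lambda-h_m)_+^\sigma\le\frac{2L_{\sigma,1}^{\mathrm{cl}}r_0^{2\sigma+1}}{2\sigma+1}\Lambda^{\sigma+1/2}$ that is independent of $m$. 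Multiplying by the cardinality $[\alpha]$ of the middle range produces the corresponding term of the statement; when $\alpha\in\mathbb{Z}$ the endpoint $m=\alpha$ (for which $h_\alpha=l(B)$) is absorbed into this same middle-range estimate.

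The principal obstacle is precisely this uniform middle-range estimate: because the magnetic potential $(m-\Phi(r))^2/r^2$ has an interior zero, semiclassical comparison with a strictly positive potential is unavailable, and the Liouville-induced Hardy correction is borderline integrable -- the two effects must be delicately balanced to extract a bound independent of $m\in\{1,\dots,[\alpha]\}$. Once this is secured, the remaining steps -- assembling the three regimes, identifying $l(B)$ and $\widetilde l(B)$ with the appropriate $m'=0$ fibers, and tracking the shift convention that produces the sign flip in the integer-flux case -- amount to straightforward bookkeeping.
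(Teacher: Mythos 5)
Your fiber decomposition, your treatment of the ranges $m\le 0$ and $m\ge\left[\int_0^{r_0}sB(s)\,\mathrm{d}s\right]+1$ (including the shift bookkeeping that produces the sign flip in the integer-flux case), and the final reassembly all coincide with the paper's argument and are correct. The genuine gap is exactly where you locate it yourself: the uniform middle-range estimate
\[
\mathrm{tr}\left(\Lambda-h_m\right)_+^\sigma\le\frac{2L_{\sigma,1}^{\mathrm{cl}}\,r_0^{2\sigma+1}}{2\sigma+1}\,\Lambda^{\sigma+1/2},\qquad 1\le m\le\left[\textstyle\int_0^{r_0}sB(s)\,\mathrm{d}s\right],
\]
is asserted but not established, and the route you sketch cannot deliver it. After the Liouville transformation the potential is $\bigl((m-\Phi(r))^2-\tfrac14\bigr)/r^2$. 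If you discard the non-negative part $(m-\Phi)^2/r^2$ to obtain an $m$-independent comparison operator, you are left with $-\partial_r^2-\tfrac{1}{4r^2}$, and the corresponding Lieb--Thirring integral $\int_0^{r_0}\bigl(\Lambda+\tfrac{1}{4r^2}\bigr)^{\sigma+1/2}\mathrm{d}r$ diverges at the origin --- the Hardy correction is not ``borderline integrable'' in the relevant sense, since $r^{-2\sigma-1}$ with $\sigma\ge3/2$ fails integrability badly. If instead you retain $(m-\Phi)^2/r^2$ to neutralize the Hardy term near $r=0$, the potential depends on $m$ through the interior turning point $\Phi(r)=m$ and no $m$-uniform bound follows. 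The ``delicate balance'' you defer to is the entire content of this range, and it is the step that produces both the term $\frac{2L_{\sigma,1}^{\mathrm{cl}}r_0^{2\sigma+1}}{2\sigma+1}[\int_0^{r_0}sB\,\mathrm{d}s]\Lambda^{\sigma+1/2}$ and the restriction $\sigma\ge3/2$.

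The device the paper uses, and which you would need to supply, is the logarithmic substitution $r=r_0e^t$, $w(t)=u(r_0e^t)$, which carries the quadratic form of $h_m-\Lambda$ to $\int_{-\infty}^0|w'|^2\,\mathrm{d}t+\int_{-\infty}^0\bigl((m-\Phi(r_0e^t))^2-\Lambda r_0^2e^{2t}\bigr)|w|^2\,\mathrm{d}t$ on the half-line: there is no Hardy correction, and the spectral parameter becomes the exponentially decaying potential $-\Lambda r_0^2e^{2t}$, which \emph{is} in $L^{\sigma+1/2}(-\infty,0)$. One then splits $(m-\Phi)^2\ge(1-\varepsilon)m^2-(\varepsilon^{-1}-1)\Phi^2$ so that $g_{m,B}\ge g_B+(1-\varepsilon)m^2$ with $g_B$ independent of $m$, bounds the sum over $1\le m\le[\int_0^{r_0}sB\,\mathrm{d}s]$ by $2[\int_0^{r_0}sB\,\mathrm{d}s]\sum_k|\mu_k(B)|^\sigma$, extends the potential of $g_B$ by zero to $\mathbb{R}$, applies the one-dimensional Lieb--Thirring inequality with the semiclassical constant (this is where $\sigma\ge3/2$ enters), and lets $\varepsilon\to1$ to remove the $\Phi^2$ term; the surviving integral $L_{\sigma,1}^{\mathrm{cl}}\int_{-\infty}^0(\Lambda r_0^2e^{2t})^{\sigma+1/2}\mathrm{d}t=\frac{L_{\sigma,1}^{\mathrm{cl}}r_0^{2\sigma+1}}{2\sigma+1}\Lambda^{\sigma+1/2}$ is precisely the per-$m$ bound you wanted. (A caveat that applies to the paper's own write-up and that a complete argument should also address: the substitution preserves the quadratic form but changes the underlying $L^2$ weight from $r\,\mathrm{d}r$ to $r_0^2e^{2t}\,\mathrm{d}t$, so identifying the Riesz means of $h_m-\Lambda$ with those of $g_{m,B}$ needs more than equality of forms.)
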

\begin{proof} We  use the standard partial wave decomposition --\cite{E96} $$L^2(\Omega)=\bigoplus_{m=-\infty}^\infty L^2((0, r_0), 2\pi r\,\mathrm{d}r)$$ and $$H_\Omega(A)=\bigoplus_{m=-\infty}^\infty h_m(B),$$ where the operators $h_m(B)$ in $L^2((0, r_0), 2\pi r\mathrm{d}r)$ are associated with the closures of the quadratic forms
$$Q(h_m(B))[u]= \int_0^{r_0} r \left|\frac{\mathrm{d}u}{\mathrm{d}r}\right|^2\,\mathrm{d}r+\int_0^{r_0}\left(\frac{m}{r}-\frac{1}{r}\int_0^r s B(s)\,\mathrm{d}s\right)^2 r |u|^2\,\mathrm{d}r,$$ defined originally on $C_0^\infty(0, r_0)$, and acting on their domain as
$$h_m(B)=-\frac{\mathrm{d}^2}{\mathrm{d}r^2}-\frac{1}{r}\frac{\mathrm{d}}{\mathrm{d}r}+\left(\frac{m}{r}-\frac{1}{r}\int_0^r s B(s)\,\mathrm{d}s\right)^2.$$
Let us first consider the case where $m\ge\left[\int_0^{r_0} s B(s)\,\mathrm{d}s\right]+1$. Then 
\begin{eqnarray}\nonumber\bigoplus_{m\ge\left[\int_0^{r_0} s B(s)\,\mathrm{d}s\right]+1}h_m(B)=\bigoplus_{k\ge1}\left(-\frac{\mathrm{d}^2}{\mathrm{d}r^2}-\frac{1}{r}\frac{\mathrm{d}}{\mathrm{d}r}+\frac{1}{r^2}\left(\left[\int_0^{r_0} s B(s)\,\mathrm{d}s\right]+k-\int_0^r s B(s)\,\mathrm{d}s\right)^2\right)\\\nonumber=\bigoplus_{k\ge1}\left(-\frac{\mathrm{d}^2}{\mathrm{d}r^2}-\frac{1}{r}\frac{\mathrm{d}}{\mathrm{d}r}+\frac{1}{r^2}\left(\int_0^{r_0} s B(s)\,\mathrm{d}s-\left\{\int_0^{r_0} s B(s)\,\mathrm{d}s\right\}+k-\int_0^r s B(s)\,\mathrm{d}s\right)^2\right)\\\label{first case}=\bigoplus_{k\ge1}\left(-\frac{\mathrm{d}^2}{\mathrm{d}r^2}-\frac{1}{r}\frac{\mathrm{d}}{\mathrm{d}r}+\frac{1}{r^2}\left(\int_r^{r_0} s B(s)\,\mathrm{d}s-\left\{\int_0^{r_0} s B(s)\,\mathrm{d}s\right\}+k\right)^2\right)\end{eqnarray}
If $\int_0^{r_0}s B(s)\,\mathrm{d}s\notin\mathbb{Z}$ then we get from (\ref{first case})
\begin{eqnarray*}\bigoplus_{m\ge\left[\int_0^{r_0} s B(s)\,\mathrm{d}s\right]+1}h_m(B)
\ge\bigoplus_{n\ge0}\left(-\frac{\mathrm{d}^2}{\mathrm{d}r^2}-\frac{1}{r}\frac{\mathrm{d}}{\mathrm{d}r}+\frac{1}{r^2}\left(\int_r^{r_0} s B(s)\,\mathrm{d}s+n\right)^2\right)\\\ge\bigoplus_{n\ge0}\left(-\frac{\mathrm{d}^2}{\mathrm{d}r^2}-\frac{1}{r}\frac{\mathrm{d}}{\mathrm{d}r}+\frac{n^2}{r^2}+\frac{1}{r^2}\left(\int_r^{r_0} s B(s)\,\mathrm{d}s\right)^2\right).\end{eqnarray*}
Therefore for any $\Lambda\ge0$ and $\sigma\ge0$ \begin{equation}\label{Hl}\mathrm{tr}\left(\Lambda-\bigoplus_{m\ge\left[\int_0^{r_0}s B(s)\,\mathrm{d}s\right]+1}h_m(B)\right)_+^\sigma\le\mathrm{tr}\left(\Lambda-\bigoplus_{n=0}^\infty l_n(B)\right)_+^\sigma,\end{equation} where the operators $l_n(B)$ in $L^2((0, r_0), 2\pi r\mathrm{d}r)$ are associated with the closures of the quadratic forms
$$Q\left(l_n(B)\right)[u]= \int_0^{r_0} r \left|\frac{\mathrm{d}u}{\mathrm{d}r}\right|^2\,\mathrm{d}r+\int_0^1\left(\frac{n^2}{r}+\frac{1}{r}\left(\int_r^{r_0} s B(s)\,\mathrm{d}s\right)^2\right)  |u|^2\,\mathrm{d}r,$$ defined originally on $C_0^\infty(0, r_0)$, and acting on their domain as
$$l_n(B)=-\frac{\mathrm{d}^2}{\mathrm{d}r^2}-\frac{1}{r}\frac{\mathrm{d}}{\mathrm{d}r}+\frac{n^2}{r^2}+\frac{1}{r^2}\left(\int_r^{r_0} s B(s)\,\mathrm{d}s\right)^2.$$
On the other hand, if $\int_0^{r_0}s B(s)\,\mathrm{d}s\in\mathbb{Z}$ then (\ref{first case})  writes
\begin{eqnarray*}\bigoplus_{m\ge\left[\int_0^{r_0} s B(s)\,\mathrm{d}s\right]+1}h_m(B)=\bigoplus_{k\ge1}\left(-
\frac{\mathrm{d}^2}{\mathrm{d}r^2}-\frac{1}{r}\frac{\mathrm{d}}{\mathrm{d}r}+\frac{1}{r^2}\left(\int_r^{r_0} s B(s)\,\mathrm{d}s+k\right)^2\right)
\\\ge\bigoplus_{k\ge1}\left(-\frac{\mathrm{d}^2}{\mathrm{d}r^2}-\frac{1}{r}\frac{\mathrm{d}}{\mathrm{d}r}+\frac{k^2}{r^2}+\frac{1}{r^2}
\left(\int_r^{r_0} s B(s)\,\mathrm{d}s\right)^2\right)\end{eqnarray*} and, similarly, 
\begin{equation}\label{hl1}\mathrm{tr}\left(\Lambda-\bigoplus_{m\ge\left[\int_0^{r_0}s B(s)\,\mathrm{d}s\right]+1}h_m(B)\right)_+^\sigma\le\mathrm{tr}\left(\Lambda-\bigoplus_{k=1}^\infty l_k(B)\right)_+^\sigma\,.\end{equation}
Using the following trace symmetry of the operators $l_n(B)$ with respect to $n$
$$\mathrm{tr}\left(\Lambda-\bigoplus_{n>0}l_n(B)\right)_+^\sigma=\mathrm{tr}\left(\Lambda-\bigoplus_{n<0}l_n(B)\right)_+^\sigma$$ 
we write $$\mathrm{tr}\left(\Lambda-\bigoplus_{n\in\mathbb{Z}}l_n(B)\right)_+^\sigma=\mathrm{tr}\left(\Lambda-\bigoplus_{n>0}l_n(B)\right)_+^\sigma+\mathrm{tr}\left(\Lambda-\bigoplus_{n<0}l_n(B)\right)_+^\sigma+\mathrm{tr}\left(\Lambda-l(B)\right)_+^\sigma$$$$=2\mathrm{tr}\left(\Lambda-\bigoplus_{n>0}l_n(B)\right)_+^\sigma+\mathrm{tr}\left(\Lambda-l(B)\right)_+^\sigma,$$ where the operator $l(B)$ is defined in (\ref{wide{tilde}{l_0}}), so that  inequality (\ref{Hl}) implies
\begin{equation}\label{Lambda}\mathrm{tr}\left(\Lambda-\bigoplus_{m\ge\left[\int_0^{r_0}s B(s)\,\mathrm{d}s\right]+1}h_m(B)\right)_+^\sigma\leq\frac{1}{2}\mathrm{tr}
\left(\Lambda-\bigoplus_{n\in\mathbb{Z}} l_n(B)\right)_+^\sigma+\frac{1}{2}\mathrm{tr}\left(\Lambda-l(B)\right)_+^\sigma,\end{equation} 
if $\int_0^{r_0}s B(s)\,\mathrm{d}s\notin\mathbb{Z}$,

\bigskip
and inequality (\ref{hl1}) implies  \begin{equation}\label{lambda}\mathrm{tr}\left(\Lambda-\bigoplus_{m\ge\left[\int_0^{r_0}s B(s)\,\mathrm{d}s\right]+1}h_m(B)\right)_+^\sigma\leq\frac{1}{2}\mathrm{tr}\left(\Lambda-\bigoplus_{n\in\mathbb{Z}} l_n(B)\right)_+^\sigma-\frac{1}{2}\mathrm{tr}\left(\Lambda-l(B)\right)_+^\sigma \end{equation} if $\int_0^{r_0}s B(s)\,\mathrm{d}s\in\mathbb{Z}$. 

\bigskip
Applying the partial wave decomposition for the two-dimensional Schr\"{o}dinger operator $-\Delta_D^\Omega+\frac{1}{x^2+y^2}\left(\int_{\sqrt{x^2+y^2}}^{r_0} s B(s)\,\mathrm{d}s\right)^2$ we obtain from the inequalities (\ref{Lambda}) and (\ref{lambda}) that, for any $\sigma\ge0$\begin{eqnarray}\nonumber\mathrm{tr}\left(\Lambda-\bigoplus_{m\ge\left[\int_0^{r_0}s B(s)\,\mathrm{d}s\right]+1}h_m(B)\right)_+^\sigma\le\frac{1}{2}\mathrm{tr}\left(\Lambda-\left(-\Delta_D^\omega+\frac{1}{x^2+y^2}\left(\int_{\sqrt{x^2+y^2}}^{r_0} s B(s)\,\mathrm{d}s\right)^2\right)\right)_+^\sigma\\\label{first}+\frac{1}{2}\mathrm{tr}\left(\Lambda-l(B)\right)_+^\sigma,\,\end{eqnarray} if $\int_0^{r_0}s B(s)\,\mathrm{d}s\notin\mathbb{Z}$, and that

 \begin{eqnarray}\nonumber\mathrm{tr}\left(\Lambda-\bigoplus_{m\ge\left[\int_0^{r_0}s B(s)\,\mathrm{d}s\right]+1}h_m(B)\right)_+^\sigma\le\frac{1}{2}\mathrm{tr}\left(\Lambda-\left(-\Delta_D^\omega+\frac{1}{x^2+y^2}\left(\int_{\sqrt{x^2+y^2}}^{r_0} s B(s)\,\mathrm{d}s\right)^2\right)\right)_+^\sigma\\\label{first1}-\frac{1}{2}\mathrm{tr}\left(\Lambda-l(B)\right)_+^\sigma,\,\end{eqnarray} if $\int_0^{r_0}s B(s)\,\mathrm{d}s\in\mathbb{Z}$.

\bigskip
Now we move on to the case where $m\le0$. Since $$h_m(B)\ge-\frac{\mathrm{d}^2}{\mathrm{d}r^2}-\frac{1}{r}\frac{\mathrm{d}}{\mathrm{d}r}+\frac{m^2}{r^2}+\frac{1}{r^2}\left(\int_0^r s B(s)\,\mathrm{d}s\right)^2$$ then, 
repeating the same ideas as before we arrive to
\begin{eqnarray}\nonumber \mathrm{tr}\left(\Lambda-\bigoplus_{m\le0}h_m(B)\right)_+^\sigma=\mathrm{tr}\left(\Lambda-\bigoplus_{m<0}h_m(B)\right)_+^\sigma+\mathrm{tr}\left(\Lambda-\widetilde{l}(B)\right)_+^\sigma\\\label{second}\le\frac{1}{2}\mathrm{tr}\left(\Lambda-\left(-\Delta_D^\omega+\frac{1}{x^2+y^2}\left(\int_0^{\sqrt{x^2+y^2}} s B(s)\,\mathrm{d}s\right)^2\right)\right)_+^\sigma+\frac{1}{2}\mathrm{tr}\left(\Lambda-\widetilde{l}(B)\right)_+^\sigma,\end{eqnarray}
where $\widetilde{l}(B)$ is given by (\ref{wide{tilde}{l_0}}).

\bigskip
Finally let $1\le m\le\left[\int_0^{r_0}s B(s)\,\mathrm{d}s\right]$. We follow the method of \cite{T12}. Performing the change of the variables $r=r_0 e^t$ in the quadratic form $Q_m\left(h_m(B)-\Lambda\right)$ corresponding to $h_m(B)-\Lambda$ \begin{equation}\label{qform}Q_m\left(h_m(B)-\Lambda\right)=\int_0^{r_0} r |u^\prime|^2\,\mathrm{d}r+\int_0^{r_0} r\left(\left(\frac{m}{r}-\frac{1}{r}\int_0^r s B(s)\,\mathrm{d}s\right)^2-\Lambda\right) |u|^2\,\mathrm{d}r\end{equation} and defining $w$ by $w(t)=u(r_0 e^t)$ we transfer (\ref{qform}) to \begin{equation}\label{transfer}\widetilde{Q}_m(B)=\int_{-\infty}^0 |w^\prime|^2\,\mathrm{d}t+\int_{-\infty}^0 \left(\left(m-\int_0^{r_0 e^t} s B(s)\,\mathrm{d}s\right)^2-\Lambda r_0^2 e^{2t}\right) |w|^2\,\mathrm{d}t,\end{equation} where $\widetilde{Q}_m(B)$ is defined on $\mathcal{H}_0^1(-\infty, 0)$ and corresponds to the one-dimensional operator $g_{m, B}=-\frac{\mathrm{d}^2}{\mathrm{d}t^2}+\left(m-\int_0^{r_0 e^t} s B(s)\,\mathrm{d}s\right)^2-\Lambda r_0^2 e^{2t}$. 

It is easy to notice that for any positive $\varepsilon<1$ \begin{equation}\label{g} g_{m, B}\ge g_B+(1-\varepsilon)m^2,\end{equation} where the operator $$g_B=-\frac{\mathrm{d}^2}{\mathrm{d}t^2}-\left(\frac{1}{\varepsilon}-1\right)\left(\int_0^{r_0 e^t} s B(s)\,\mathrm{d}s\right)^2-\Lambda r_0^2 e^{2t}$$ is defined on $\mathcal{H}_0^1(-\infty, 0)$.  

Let $\{\mu_k(B)\}_{k=1}^\infty$ be the set of the negative eigenvalues of $g_{m, B}$. Then due to  the minimax principle (\ref{g})
implies \begin{eqnarray}\nonumber\mathrm{tr}\left(\Lambda-\bigoplus_{m=1}^{\left[\int_0^{r_0}s B(s)\,\mathrm{d}s\right]}h_m(B)\right)_+^\sigma=\sum_{m=1}^{\left[\int_0^{r_0}s B(s)\,\mathrm{d}s\right]}\mathrm{tr}\left(\Lambda-h_m(B)\right)_+^\sigma\\\nonumber=\sum_{m=1}^{\left[\int_0^{r_0}s B(s)\,\mathrm{d}s\right]}\mathrm{tr}(g_{m, B})_-^\sigma\le\sum_{m=1}^{\left[\int_0^{r_0}s B(s)\,\mathrm{d}s\right]}\mathrm{tr}(g_B+(1-\varepsilon) m^2))_-^\sigma\\\nonumber\le\sum_{m=1}^{\left[\int_0^{r_0}s B(s)\,\mathrm{d}s\right]}\sum_{\mu_k(B)+(1-\varepsilon) m^2\le0}|\mu_k(B)+(1-\varepsilon) m^2|^\sigma\\\nonumber\le\sum_{|\mu_k(B)|\le(1-\varepsilon)\,\left[\int_0^{r_0}s B(s)\,\mathrm{d}s\right]^2}\,\sum_{1\le |m|\le\frac{1}{\sqrt{1-\varepsilon}}\sqrt{|\mu_k(B)|}}|\mu_k(B)+(1-\varepsilon) m^2|^\sigma\\\nonumber+\sum_{|\mu_k(B)|>(1-\varepsilon)\left[\int_0^{r_0}s B(s)\,\mathrm{d}s\right]^2}\,\sum_{1\le |m|\le\left[\int_0^{r_0}s B(s)\,\mathrm{d}s\right]}|\mu_k(B)+(1-\varepsilon) m^2|^\sigma\\\label{mu}\le2\left[\int_0^{r_0}s B(s)\,\mathrm{d}s\right]\,\sum_k|\mu_k(B)|^\sigma \end{eqnarray}
Let us extend the potential $-\left(\frac{1}{\varepsilon}-1\right)\left(\int_0^{r_0 e^t} s B(s)\,\mathrm{d}s\right)^2-\Lambda r_0^2 e^{2t}$ to $\mathbb{R}$ by zero and denote the corresponding one dimensional Schr\"{o}dinger operator by $\widetilde{g}_B$. (We omit the dependence on $\epsilon$ in the notation for simplicity). Since $C_0^\infty(-\infty, 0)\subset C_0^\infty(\mathbb{R})$ then \begin{equation}\label{nu}\sum_k|\mu_k(B)|^\sigma\le\sum_k|\nu_k(B)|^\sigma,\end{equation} where $\{\nu_k(B)\}_{k=1}^\infty$ are the negative eigenvalues of $\widetilde{g}_B$. 

Applying the trace formulae --\cite{LT76} for any $\sigma\ge3/2$ we get
$$\sum_k|\nu_k(B)|^\sigma\le L_{\sigma, 1}^{\mathrm{cl}}\int_{-\infty}^0\left(\left(\frac{1}{\varepsilon}-1\right)\left(\int_0^{r_0 e^t} s B(s)\,\mathrm{d}s\right)^2+\Lambda r_0^2 e^{2t}\right)^{\sigma+1/2}\,\mathrm{d}t$$$$=L_{\sigma, 1}^{\mathrm{cl}}\int_0^{r_0}\left(\left(\frac{1}{\varepsilon}-1\right)\left(\int_0^r s B(s)\,\mathrm{d}s\right)^2+ \Lambda r^2\right)^{\sigma+1/2}\,\frac{1}{r}\,\mathrm{d}r.$$
After passing to the limit $\varepsilon\to1$ the above inequality yields \begin{equation}\label{number}\sum_k|\nu_k(B)|^\sigma\le\frac{r_0^{2\sigma+1}}{2\sigma+1}\,L_{\sigma, 1}^{\mathrm{cl}} \,\Lambda^{\sigma+1/2}\,.\end{equation}
By virtue of the estimates (\ref{mu})-(\ref{number}) we obtain 
\begin{eqnarray*}\mathrm{tr}\left(\Lambda-\bigoplus_{m=1}^{\left[\int_0^{r_0}s B(s)\,\mathrm{d}s\right]}h_m(B)\right)_+^\sigma\le\frac{2L_{\sigma, 1}^{\mathrm{cl}} r_0^{2\sigma+1}}{2\sigma+1}\left[\int_0^{r_0}s B(s)\,\mathrm{d}s\right]\,\Lambda^{\sigma+1/2}\,,\end{eqnarray*} which together with (\ref{first})- (\ref{second}) establishes the theorems. 

\end{proof}

\begin{remark}
Let us assume that $\int_0^{r_0} s B(s)\,\mathrm{d}s<1$. In view of Theorem\,2.1 the threshold of the spectrum of the corresponding magnetic Laplacian can be estimated from below by the minimum of the spectral thresholds of one-dimensional operators $l(B)$ and $\widetilde{l}(B)$ and the two-dimensional  Schr\"{o}dinger operators with the potentials $\frac{1}{x^2+y^2} \left(\int_{\sqrt{x^2+y^2}}^{r_0}s B(s)\,\mathrm{d}s\right)^2$ and $\frac{1}{x^2+y^2} \left(\int_0^{\sqrt{x^2+y^2}}s B(s)\,\mathrm{d}s\right)^2$, which is not possible to obtain from a standard estimate (\ref{Berezin bound}). \end{remark}

\bigskip \section*{Acknowledgements}

The work of D.B. is supported by Czech Science Foundation (GACR), the project 14-02476S "Variations, geometry and physics'' and project SMO "Posileni mezinarodniho rozmeru vedeckych aktivit na Prirodovedecke fakulte OU v Ostrave" No. 0924/2016/SaS.
 F.T. is a member of  the ANR GeRaSic (G\'eom\'etrie spectrale, Graphes, Semiclassique).

D.B. appreciates the hospitality in Institut Fourier where the preliminary version of the paper was prepared.
\bigskip 

\end{document}